\theoremstyle{plain}
\newtheorem{theorem}{Theorem}[section]
\newtheorem{lem}[theorem]{Lemma}
\theoremstyle{definition}
\newtheorem{defn}[theorem]{Definition}
\newtheorem{obs}[theorem]{Remark}
\numberwithin{equation}{section}
\numberwithin{figure}{section}
\newcommand{\bbP}{\mathbb{P}}
\newcommand{\bbE}{\mathbb{E}}
\newcommand{\bbT}{\mathbb{T}}
\begin{document}

\baselineskip=18pt

\title[Evolution with mass extinction on $\bbT_d^+$]{Evolution with mass extinction on $\bbT_d^+$}

\author[{\color{blue}Carolina Grejo}]{Carolina Grejo}
\address[C. Grejo]{Institute of Mathematics and Computer Science, Universidade de S\~ao Paulo,  Brazil.}
\email{carolina@ime.usp.br}

\author[F\'abio Lopes]{F\'abio Lopes}
\address[F. Lopes]{Departamento de Matem\'atica, Universidad Tecnol\'ogica Metropolitana, Chile}
\email{f.marcellus@utem.cl}

\author[F\'abio Machado]{F\'abio Machado}
\address[F. Machado]{Institute of Mathematics and Statistics, Universidade de S\~ao Paulo, Brazil.}
\email{fmachado@ime.usp.br}

\author[Alejandro Roldan]{Alejandro Rold\'an-Correa}
\address[A. Roldan]{Instituto de Matematicas, Universidad de Antioquia, Colombia}
\email{alejandro.roldan@udea.edu.co}

\thanks{Research supported by CAPES(001), CONICYT/FONDECYT Postdoctorado (3160163), CNPq(303699/2018-3), FAPESP(17/10555-0) and Universidad de Antioquia.}

\keywords{Branching Processes, Evolution, Mutation, Fitness}
\subjclass[2010]{60J80, 60J85, 60K35}
\date{\today}

\begin{abstract} 
We propose a stochastic model for evolution through mutation and natural selection of a population that evolves on a $\bbT_d^+$ tree. We think of this model as a way of describing the evolution fitness landscape of a population. We obtain sharp and distinct conditions on the set of parameters for extinction and survival.
\end{abstract}

\maketitle

\section{Introduction}
\label{S: Introduction}

We propose a stochastic spatial model for evolution through mutation and natural selection.  The proposed model is 
an interacting particle system evolving on $\bbT_d^+$
(an infinite rooted tree whose vertices have degree $d+1$, except the root that has degree $d$) whose state space is $\{0,1\}^{\bbT_d^+}$. That is, the status of a vertex can be either ``0'' (empty) or ``1'' (occupied  by a particle).  The set of vertices that are at a distance  $i$ from the root is namely the  level $i$ of the tree.
 At time $t=0$ there is only one particle in the system, located at the root of 
$\bbT_d^+$. This particle will have a deterministic lifetime of length 1. During its lifetime, following a Poisson process of rate $\lambda$, it generates offspring (new particles) which will be placed randomly, one by one, on its $d$ nearest neighbours up to the time it dies or up to the event that all its nearest neighbours are occupied. 
In general, each particle which are placed at level $i$ of ${\bbT_d^+}$ may generate offspring according to an independent Poisson process with rate $\lambda$. These offspring will be placed randomly, one by one, at one of the $d$ nearest neighbours ($(i+1)-th$ level) up to the time their progenitor dies or up to the event that all these $d$ nearest neighbours are occupied. With respect to the death time, all individuals placed at level $i$ follow the same clock. However, {this clock (of length 1) does not start to tick until their progenitors (placed at level $(i-1)$) dies out.  We call this the residual lifetime. Note that while individuals placed at a given level have different lifetimes they die (and are removed) at the same time. Besides that their residual lifetime starts and ends at the same moment. We call this a mass extinction event. 

We think of our model as describing the fitness landscape of a population. We consider the offspring as beneficial mutations that occurs in the population and the level where they are placed, their fitness. Here $(i)$ a mutation is not at risk while its progenitor 
is alive and $(ii)$ a death event at a given time, kills all the least fit mutations (particles placed at the lower level) 
present in the process. It combines features that have been recently explored in population evolutionary dynamics as it assumes $(i)$ the occurrence of mass extinction events that may lead all individuals of certain type in the population to extinction as in Schinazi and Schweinsberg~\cite{schinazi} and $(ii)$ that mass extinction events kill the least fit type of individuals in the population as in Guiol \textit{et al.} \cite{GMS11}.  Figure~\ref{fig:1} shows an illustration of the model on $\bbT_2^+$.\\

\begin{figure}[h!]
	\centering
	\includegraphics[width=\textwidth]{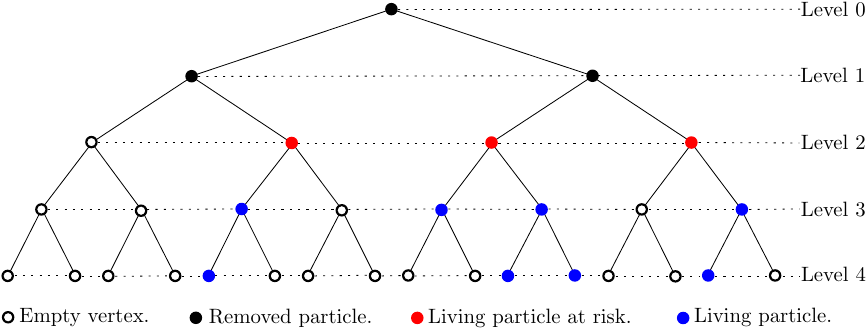}
	\caption{Illustration of the model on $\bbT_2^+$ with vertex depth up to 4.}
	\label{fig:1}
\end{figure}

Our model is a variation of the birth-and-assassination (BA) process introduced by Aldous and Krebs  \cite{aldous}.The BA process  is a variant the classical branching process. 	It was stated to investigate the scaling limit of a certain queuing system with blocking which appeared in database processing, see Tsitsiklis \textit{et al}~\cite {Tsitsiklis}. 
In the BA process the clocks that define the residual lifetime of each offspring of a particle start to tick as soon as the lifetime of that particle comes to an end, independently from each	other. Another difference in BA process is that each particle which are placed at level $i$ may generate offspring according to an independent Poisson process with rate $\lambda$ and no restriction. Our model has a stronger dependence among particles lifetime,
as all particles at the same level share the same clock as for the death time.

Our model is also related to spatial catastrophe models where large sets of individuals are simultaneously removed, see for example Lanchier \cite{L2011} and Machado  \textit{et al.} \cite{MRS17}. 
We can also imagine the particles in this process as pathogens in such way 
that if the process  dies out, the disease is defeated.  

The aim of this paper is to establish the relationship between the birth and death rates in the survival or extinction of infectious process. Hence, we  found conditions under which particles are eventually removed from $\bbT_d^+$.

\begin{defn} 
If all particles are eventually removed from $\bbT_d^+$ with probability 1, we say that the process 
{\it dies out}. Otherwise, we say that the process {\it survives}.
\end{defn}

Let us introduce the notation $ \eta_t \in \{0,1\}^{\bbT_d^+}$ for the status of the vertices in terms of occupation at time $t$ and $|\eta_t|$ 
for the amount of particles present at time $t$. By coupling arguments, one can see that the pro\-bability of survival is a non-decreasing function of $\lambda$. This is so because more births (greater $\lambda$) can only help survival. So we define

\[ \lambda_c(d) := \inf \{ \lambda: \bbP(|\eta_t| \ge 1 \hbox{ for all } t \ge 0) > 0 \}. \]
 
\section{Results}

We prove phase transition (meaning that $0 < \lambda_c(d) < \infty$)  for this process.

\begin{theorem}
\label{T:arvore} For $d$ fixed 
$$\lambda_c(d)=\inf\left\{\lambda: \inf_{u>0}\frac{\lambda e^u}{u}\left[1-\left(\frac{\lambda}{\lambda +u}\right)^d\right]>1\right\}.$$
If $\lambda<\lambda_c(d)$ then the process $\eta_t$ dies out. If $\lambda>\lambda_c(d)$  then the process $\eta_t$ survives.
\end{theorem}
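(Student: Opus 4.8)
The plan is to collapse the spatial system $\eta_t$ onto a type-structured branching process and to read the criterion off the growth rate of its mean reproduction operator. The key geometric simplification is that in $\bbT_d^+$ the sets of $d$ forward-neighbours of distinct vertices at a common level are disjoint, so the occupied vertices at distance $i$ from the root (the type-$i$ particles) fill their neighbourhoods without interfering, and it suffices to follow the genealogy level by level. The single shared clock per type means that all type-$i$ particles die at one common time $S_i$, with increments $S_i-S_{i-1}$ i.i.d. $\mathrm{Exp}(1)$. To each particle I would attach its \emph{lag} $R=S_i-B$, where $B$ is its birth time, since $R$ is exactly the length of its reproduction window. A particle of lag $R$ then produces $\min(\mathrm{Poisson}(\lambda R),d)$ children, and a child born at the $k$-th Poisson arrival $A_k\le R$ inherits lag $R'=(R-A_k)+E$ with a fresh $E\sim\mathrm{Exp}(1)$ (the shared increment of the next level). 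This presents $\eta_t$ as a branching process with type space the lag $R\in(0,\infty)$, the root starting from $R_0\sim\mathrm{Exp}(1)$.

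Next I would analyse the mean reproduction operator $(T\varphi)(R)=\bbE\big[\sum_{\text{children}}\varphi(R')\mid R\big]$ on exponential test functions $\varphi(R)=e^{uR}$, $0<u<1$. Using $\bbE[e^{uE}]=(1-u)^{-1}$, $\bbE[e^{-uA_k}]=(\lambda/(\lambda+u))^{k}$, and the crude bound $\mathbf{1}\{A_k\le R\}\le 1$, one obtains
\[
(Te^{u\cdot})(R)\ \le\ \frac{e^{uR}}{1-u}\sum_{k=1}^{d}\Big(\frac{\lambda}{\lambda+u}\Big)^{k}\ =\ f(\lambda,u)\,e^{uR},\qquad f(\lambda,u):=\frac{\lambda}{u(1-u)}\Big[1-\Big(\frac{\lambda}{\lambda+u}\Big)^{d}\Big],
\]
where the geometric-sum identity $\tfrac{\lambda}{u}\big[1-(\lambda/(\lambda+u))^{d}\big]=\sum_{k=1}^{d}(\lambda/(\lambda+u))^{k}$ produces the stated closed form. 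Thus $e^{uR}$ is a supersolution with multiplier $f(\lambda,u)$, and iterating gives $\bbE\big[\sum_{\text{type-}n}e^{uR}\big]\le f(\lambda,u)^{n}e^{uR_0}$ for every $u$; in particular the growth rate $\rho$ of $T$ satisfies $\rho\le\inf_{0<u<1}f(\lambda,u)$, which is exactly the quantity whose crossing of $1$ defines $\lambda_c(d)$.

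For extinction ($\lambda<\lambda_c(d)$) I would use this first-moment bound directly: by monotonicity of $\inf_u f(\lambda,u)$ in $\lambda$ one has $\inf_u f(\lambda,u)<1$ below the threshold, so choosing $u$ with $f(\lambda,u)<1$ forces the expected number of type-$n$ particles to tend to $0$, whence $\bbP(|\eta_t|\ge 1 \text{ for all } t)=0$. For survival ($\lambda>\lambda_c(d)$, so that $f(\lambda,u)>1$ for \emph{every} $u$) the supersolution bound points the wrong way, and I would instead extract an embedded supercritical Galton–Watson process: fix a lag window $[\delta,M]$, retain only children whose lag falls in $[\delta,M]$, and show—using the slack $\inf_u f>1$ to absorb the mass lost both to the constraint $A_k\le R$ and to the truncation—that the mean number of retained children exceeds $1$ uniformly in $R\in[\delta,M]$. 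A standard comparison then yields survival with positive probability, giving the matching lower bound $\rho\ge\inf_u f(\lambda,u)$.

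The main obstacle is precisely the pair of features that separate this model from an ordinary branching process. The cap at $d$ offspring together with the finite window (the indicator $\mathbf{1}\{A_k\le R\}$) renders $e^{uR}$ only a one-sided eigenfunction, so the easy direction gives $\rho\le\inf_u f$ but no automatic reverse bound; establishing $\rho\ge\inf_u f$ in the supercritical regime is the delicate point, and this is where the uniform lower bound on the retained-offspring mean must be argued carefully, controlling the escape of lag mass near $0$. Compounding this, the one-clock-per-type rule forces all type-$i$ particles to share the same increment $E=S_i-S_{i-1}$, so offspring counts within a generation are dependent; this is harmless at the level of first moments (each generation contributes a single factor $(1-u)^{-1}$, so the value $\inf_u f$ is unaffected), but for the survival comparison the dependence must be tamed, which I would do either by conditioning on a favourable run of clock increments or by a conditional Paley–Zygmund estimate, where the shared $\mathrm{Exp}(1)$ factor must be controlled at the level of the variance.
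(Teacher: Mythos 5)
Your extinction half is sound and is essentially the paper's argument in operator language: the paper bounds the expected total progeny by $\sum_k d^k\,\mathbb{P}\bigl(\sum_{i\le j}W_i<\sum_{i\le j}K_i,\ j\le k\bigr)$ and applies a Chernoff bound with the same Laplace transforms you use, arriving at $\sum_k f(\lambda,u)^k<\infty$; your supersolution bound $(Te^{u\cdot})(R)\le f(\lambda,u)e^{uR}$ iterated through the many-to-one formula is the same first-moment computation, and the shared clock is indeed harmless there by linearity.

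The survival half has a genuine gap. Your plan is to truncate lags to a window $[\delta,M]$ and show the one-generation mean number of retained children exceeds $1$ uniformly on the window. This cannot work anywhere near criticality, because the one-generation mean offspring is below $1$ throughout most of the supercritical regime. Concretely, the expected total number of children (retained or not) of a particle with lag distributed as the root's, $R_0\sim\mathrm{Exp}(1)$, is $\sum_{k=1}^{d}\mathbb{P}(A_k\le R_0)=\sum_{k=1}^{d}\bigl(\lambda/(\lambda+1)\bigr)^{k}=\lambda\bigl[1-(\lambda/(\lambda+1))^{d}\bigr]<\lambda$, which is $<1$ for every $\lambda<1$, while $\lambda_c(d)<0.3$ for all $d\ge 2$. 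The same bound afflicts the bottom of any window: a particle with lag exactly $\delta$ has a retained child at the $k$-th arrival only if $E\ge A_k$, so its mean retained offspring is at most $\sum_{k=1}^{d}\mathbb{P}(E\ge A_k)=\lambda\bigl[1-(\lambda/(\lambda+1))^{d}\bigr]<1$, independently of $\delta$ and $M$. Supercriticality in this model is a multi-generation phenomenon: what exceeds $1$ is $d^{k}\,\mathbb{P}\bigl(\sum_{i=1}^{j}Z_i>0,\ j=1,\dots,k\bigr)$ for \emph{large} $k$, and detecting this requires the large-deviation estimate of Aldous and Krebs (Lemma 1 of their paper), which identifies the exponential rate of that probability as $\log\inf_{u}\bigl(f(\lambda,u)/d\bigr)$. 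The paper then builds the embedded Galton--Watson process on blocks of $k$ generations (types $0,k,2k,\dots$), using the fact that a newborn particle's lag stochastically dominates an $\mathrm{Exp}(1)$ to compare each block to the root's first $k$ generations, and invokes memorylessness to handle the shared clocks. Your reverse spectral bound $\rho\ge\inf_u f(\lambda,u)$ is exactly the hard direction you flagged, but the single-generation truncation you propose for it would at best prove survival for $\lambda$ of order $1$ or larger, missing the entire interval $(\lambda_c(d),1)$; you need the block/large-deviation construction (or an equivalent analysis of the $k$-step kernel) in its place.
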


\begin{obs} Let us define  
\[ f_d(\lambda):=\inf_{u>0}\frac{\lambda e^u}{u}\left[1-\left(\frac{\lambda}{\lambda +u}\right)^d\right]. \]

The function $f_d(\lambda)$ is continuous and strictly monotone in $(0,1)$, $f_d(0)=0$
and $f_d(1)>1$. Thus, by Theorem~\ref{T:arvore}, we have that $\lambda_c(d)$ is the unique solution for  $f_d(\lambda)=1$.

Table~\ref{tab:adicaoZ4} shows some numerical approximations for $\lambda_c(d)$. 

\begin{table}[htb]
	\centering
	\large
\begin{tabular}{|c|c|c|c|c|c|c|c|c|}\hline
$d$ & 2 &3  & 4     &  5  & 6 & 7           \\ \hline
$\lambda_c(d)$ & 0.40564 & 0.37596 &0.36990 &0.36839 & 0.36800 & 0.36792   \\ \hline
\end{tabular}\vspace{0.4cm}
	\caption{Numerical approximations for  $\lambda_c(d)$.}
\label{tab:adicaoZ4}
\end{table}

From the fact that $\lim_{d \to \infty} f_d(\lambda) = e\lambda$ and adap\-ting the
proof of Proposition 3.1 in Junior {\it et al.}~\cite{JMR16} one can see that
\begin{equation}
\label{lambdac}
 \lambda_c(d)\rightarrow 1/e.
\end{equation}
\end{obs}

\section{Proofs}

The proof below adapts a strategy presented in Aldous and Krebs \cite{aldous} for the survival and extinction of the birth-and-assassination (BA) process.  
We nevertheless include all the details here for completeness.
\begin{proof}[Proof Theorem \ref{T:arvore}]

First, we show that, the process goes extinct with probability 1, if $\inf_{u>0}\left\{\frac{\lambda e^u}{u}\left[1-\left(\frac{\lambda}{\lambda +u}\right)^d\right]\right\}<1$.\\
 Let $X_i$, $i=1,\ldots, d$, denote independent Gamma random variables with parameters $i$ and $\lambda$, respectively, and consider a random variable $W$ with probability distribution given by  
	$$\mathbb{P}(W\leq w)=\frac{1}{d}\left[\sum_{i=1}^d\mathbb{P}(X_i\leq w)\right],~w\geq 0.$$
	Note that, for $u>0$,
	\begin{eqnarray*}
	\mathbb{E}\left[e^{-uW}\right]&=&\frac{1}{d}\left[\frac{\lambda}{\lambda+u}+\left(\frac{\lambda}{\lambda +u}\right)^2+\cdots+\left(\frac{\lambda}{\lambda +u}\right)^d\right]\\
	&=&\frac{\lambda}{ud}\left[1-\left(\frac{\lambda}{\lambda +u}\right)^d\right].
	\end{eqnarray*}

	In our model, each time a particle occupying a vertex at the $k$-th level of $\bbT_d^+$ has an offspring, it chooses uniformly at random a position to place its offspring among its empty neighboring vertices at the $(k+1)$-th level of $\bbT_d^+$; once no empty vertex is left all incoming  particles are ignored. 
	In the absence of any information on these births and the ordering of the occupancy of the neighboring vertices at the $(k+1)$-th level, the time which is necessary for a fixed vertex at the $(k+1)$-th level to receive an offspring of a particle on its neighboring vertex at the $k$-th level has the same distribution of a random variable $W$. To see this, note that the fixed vertex at $(k + 1)$-th level   is occupied by the $i$-th offspring of its neighbor at $k$-th level with probability $1 / d$. The time for this to occur has Gamma distribution with parameters $(i, \lambda)$. Of course, such `virtual' births only become `real' ones, if the particle at the $k$-th level has these births before the killing event that removes all particles at the $k$-th level.\\
	Let $\{W_i\}$ be independent copies of the random variable $W$. Suppose that no information is known on the genealogy of a vertex at the $k$-th level in a given lineage of $\bbT_d^+$. Then, the probability that a particle is born at a given vertex at the $k$-th level of $\bbT_d^+$ is equal to

	$$\mathbb{P}\left(\sum_{i=1}^{j} W_i < \textcolor{blue}{j},~j=1,...,k\right),$$	

Note that, this probability is the same for every vertex at the $k$-th level and that, there are $d^k$ such vertices. Thus,\\

\noindent
$\mathbb{E}\left(\mbox{total number of particles born in the process}\right)=$
\begin{eqnarray*}\label{cheby}
&=&\sum_{v\in \bbT_d^+}\mathbb{E}[\mathbb{I} \{\mbox{a particle is born at}~v\}] \\
&=&\sum_{k\geq 1}d^k\mathbb{P}\left(\sum_{i=1}^j W_i < \textcolor{blue}{j},~j=1,...,k\right)  \\
&\leq &\sum_{k\geq 1}d^k\mathbb{P}\left(\sum_{i=1}^k W_i < \textcolor{blue}{k}\right) \\
&\leq &\sum_{k \geq 1}{d^k} \mathbb{E}\left(e^{u(\textcolor{blue}{k}-\sum_{i=1}^k W_i)}\right),  \\
&= &{\sum_{k\geq 1}\left[\frac{\lambda e^u}{u}\left[1-\left(\frac{\lambda}{\lambda +u}\right)^d\right]\right]^k,}
	\end{eqnarray*}

\noindent
where the second inequality is obtained by using Markov's inequality 
for $u>0$, and the last expression follows from independence and the properties of the moment generating functions of the random variables $\{W_i \}$.\\	Clearly, if {$\inf_{u>0}\left\{\frac{\lambda e^u}{u}\left[1-\left(\frac{\lambda}{\lambda +u}\right)^d\right]\right\}<1$}, then
\[\mathbb{E}\left(\mbox{total number of particles born in the process}\right)<+\infty.\] 
In this case, the process goes extinct since with probability 1 only a finite number of particles enters the system.\\
	
Next, we show that, when {$\inf_{u>0}\left\{\frac{\lambda e^u}{u}\left[1-\left(\frac{\lambda}{\lambda +u}\right)^d\right]\right\}>1$}, the process survives with positive probability. For this, we need the following  lemma of Aldous \& Krebs \cite{aldous}, which gives a large deviation estimate for the probability that a particle is born at the $k$-th level of $\bbT_d^+$.
\begin{lem}{\cite[Lemma 1]{aldous}}
\label{lemaldous}
Let $X_1, X_2, \ldots$ be i.i.d. random variables with $\mathbb{E}[X] <0$ and $\mathbb{P} [ X>0]>0$. Let $\mathbb{E}[e^{u X}]=\psi(u)$ be finite in some neighborhood of 0, and let $\rho=\inf_{u>0} \psi(u)$. Then,
$$ \lim_{n\rightarrow \infty}\frac{1}{n} \log \mathbb{P} \left[ \sum_{j=1}^k X_j>0, k=1,\ldots, n. \right]= \log \rho. $$ 
\end{lem}	
	Let $Z_i=\textcolor{blue}{1}-W_i$, $i=1,\ldots,n$. The probability that a particle is born at a fixed vertex at the $k$-th level can be rewritten as 

\begin{eqnarray*}\label{eq:Probverticefixo}	
		\mathbb{P}\left(\sum_{i=1}^j Z_i>0,~j=1,\ldots,k\right).
		\end{eqnarray*}
It is easy to check that, $\mathbb{E}[Z]<0$ if $\lambda <(d+1)/2$. So, by Lemma \ref{lemaldous},	

\[ \lim_{k\rightarrow\infty}\frac{1}{k}\log\mathbb{P}\left(\sum_{i=1}^j Z_i>0,~j=1,\ldots,k\right)= \hfill \] \[\hfill = \log\left[{\inf_{u>0}\left\{\frac{\lambda e^u}{ud}\left[1-\left(\frac{\lambda}{\lambda +u}\right)^d\right]\right\}} \right].\]

Moreover, by assumption, for some $\delta>0$,
$${\inf_{u>0}\left\{\frac{\lambda e^u}{u}\left[1-\left(\frac{\lambda}{\lambda +u}\right)^d\right]\right\}}=1+\delta.$$
Suppose for now that $\lambda <(d+1)/2$ and take $\epsilon=\delta/2$. Then, there exists $K\in\mathbb{N}$ such that for all $k\geq K$,\\	

$\mathbb{E} [\hbox{number of particles at $k$--th level of} ~\bbT_d^+]$
\begin{eqnarray*}\label{probpatogk}
&=&d^k\mathbb{P}\left(\sum_{i=1}^j Z_i>0,~j=1,\ldots, k\right) \nonumber \\
&>&d^k\left[ {\inf_{u>0}\left\{\frac{\lambda e^u}{du}\left[1-\left(\frac{\lambda}{\lambda +u}\right)^d\right]\right\}}-\frac{\epsilon}{d}\right]^k \nonumber\\
&=&\left[{\inf_{u>0}\left\{\frac{\lambda e^u}{u}\left[1-\left(\frac{\lambda}{\lambda +u}\right)^d\right]\right\}}-{\epsilon}\right]^k\\
&=&(1+\delta/2)^{k} >1.
\end{eqnarray*}

Note that each particle at $(n-1)k$--th level, may have at most $d^{nk}$  descendants at $nk$--th level. Moreover, the probability that a certain particle at level $(n-1)k$ has a descendant particle at $nk$--th level is greater than the probability that the particle originally located at the root has descendant at level $k$. This follows since by the time a particle at level $(n-1)k$ is born, it may not be at risk because some killing events corresponding to the previous levels
may still be pending. Therefore, the expected number of particles at level $nk$, descendants of a given particle at level $(n-1)k$, is also greater than 1.	
	
Now we define $\{Y_n\}_{n \ge 1}$ as an auxiliary process such that $Y_n$ is the number
of particles at level $nk$ born in the process $\eta_t$. Note that, from the definition of 
$\eta_t$, $Y_0=1$ and from the previous paragraph, $\{Y_n\}_{n \ge 1}$  dominates a 
Galton-Watson process with mean offspring $\mathbb{E}[Y_1]$. Since $\mathbb{E}[Y_1]>1$, the process $\{Y_n\}_{n \ge 1}$ survives with positive probability and, consequently, $\eta_t$ also does.

To conclude, a simple coupling argument can show that the survival probability is non-decreasing in $\lambda$. Hence, this result also holds for $\lambda\geq (d+1)/2$ provided {$\inf_{u>0}\left\{\frac{\lambda e^u}{u}\left[1-\left(\frac{\lambda}{\lambda +u}\right)^d\right]\right\}>1$}.
\end{proof}

\begin{obs}
Once the ancestors of a particle die, its residual lifetime is deterministic of length 1. Thus, the number of offspring generated by two different particles that are at the same tree level, during their residual lifetime, are independent. On the other hand, by the coupled death mechanism, this would not be true if residual lifetime were random. The independence property is important to guarantee that the auxiliary process $\{Y_n\}_{n\geq1}$ defined in the proof of Theorem~\ref{T:arvore} dominates a Galton-Watson process.
\end{obs}

\begin{obs}
Bordenave \cite{bordenave} shows that the BA process dies out at criticality for exponential residual lifetimes of rate 1. Actually he computes the expected total number of individuals born, $\bbE[N]$, showing at the critical value of the BA process, $\lambda_c = {1/4}$, it holds that $\bbE[N] = 2$. While this is a very interesting value, we could not extend the analytical approach of Bordenave to our model. We conjecture that extinction holds at the critical value,  $\lambda_c(d)$, for our model. 
\end{obs}



\begin{thebibliography}{}
\bibitem{aldous}
D. Aldous  and  W. B. Krebs.  The Birth-and-Assassination process. 
\textit{Statistics and Probability Letters,} \textbf{10}, 427-430, (1990).
\bibitem{bordenave}
C. Bordenave.  On the birth-and-assassination process, with an application to scotching a rumor in a network. \textit{Elect. J. Probab.}, \textbf{13}, 2014-2030, (2008). 

\bibitem{GMS11}{H. Guiol, F. Machado and R. Schinazi.}
A Stochastic model of evolution.
\textit{Markov Process. Related Fields}, \textbf{17}, (2), 253-258, (2011). 

\bibitem{JMR16}
V.V. Junior, F.P. Machado and A. Rold\'an-Correa. 
Dispersion as a Survival Strategy.	
\textit{Journal of Statistical Physics}, \textbf{159}, (4), 937-951, (2016).

\bibitem{L2011}
N. Lanchier. 
Contact process with destruction of cubes and hyperplanes: forest fires versus tornadoes. \textit{J.
Appl. Probab.} \textbf{48}, (2), 352-365 (2011)

\bibitem{MRS17}
F.P. Machado, A. Rold\'an-Correa and R. B. Schinazi.
Colonization and Collapse.
\textit{ALEA, Lat. Am. J. Probab. Math. Stat.} \textbf{14}, 719-731 (2017)

\bibitem{Tsitsiklis} 
J. Tsitsiklis, C. Papadimitreou and P. Humblet. The performance of a precedence based
queuing discipline. \textit{J. Assoc. of Compt. Mach.} \textbf{33}, 593-602 (1986).

\bibitem{schilivro}
 R. Schinazi.  
 Classical and spatial stochastic process with applications to biology. Second Edition.
 \textit{Birkh$\ddot{a}$user}. (2014).

\bibitem{schinazi}
{R. Schinazi and J. Schweinsberg.} 
Spatial and Non-spatial Stochastic Models for Immune Response.
\textit{Markov Process. Related Fields,} \textbf{14}, (2), 255-276, (2008). 

\end{thebibliography}
\end{document}